\documentclass[12pt,twoside]{article}
\usepackage{latexsym,amssymb,amsthm,amsmath,lscape,epsfig,setspace,tikz,slashed,mathtools}
\usepackage[all]{xy}
\usepackage[retainorgcmds]{IEEEtrantools}
\usetikzlibrary{decorations.markings}
 \usetikzlibrary{arrows}
\usepackage[pdftex]{hyperref}
\usepackage[margin=10pt,font=small,labelfont=bf]{caption}
\usepackage{mathrsfs}

\usepackage{combelow}

\theoremstyle{plain}
\newtheorem{theorem}{Theorem}[section]
\newtheorem{lemma}[theorem]{Lemma}

\newtheoremstyle{namedthm}{}{}{\itshape}{}{\bfseries}{\!\!.}{0.5em}{\thmnote{#3 }}
\theoremstyle{namedthm}

\theoremstyle{definition}
\newtheorem{definition}[theorem]{Definition}

\newtheorem{example}[theorem]{Example}

\newenvironment{renumerate}%
{%
\begin{enumerate}}%
{\end{enumerate}%
}%

{\vskip6pt%
\noindent%
{\it Remark.}}%
{\vskip6pt}

{\vskip6pt%
\noindent%
{\it Remarks}. %
\begin{renumerate}}%
{\end{renumerate}\vskip6pt}

\makeatletter
\def\Ddots{\mathinner{\mkern1mu\raise\p@
\vbox{\kern7\p@\hbox{.}}\mkern2mu
\raise4\p@\hbox{.}\mkern2mu\raise7\p@\hbox{.}\mkern1mu}}
\makeatother

\newcommand{\R}{\text{${\mathbb R}$}}
\newcommand{\C}{\text{$\mathbb C$}}
\newcommand{\NN}{\text{$\mathbb N$}}

\renewcommand{\frak}[1]{\text{$\mathfrak{#1}$}}
\renewcommand{\tilde}{\widetilde}

\renewcommand{\bar}{\overline}

\newcommand{\tensor}{\otimes}

\newcommand{\mc}[1]{\text{$\mathcal{#1}$}}

\newcommand{\noqed}{\let\qed\relax}

\newcommand{\gcs}{generalized complex structure}

\newcommand{\gcss}{generalized complex structures}

\newcommand{\gc}{generalized complex}

\newcommand{\gcy}{generalized Calabi--Yau}
\newcommand{\gcym}{generalized Calabi--Yau manifold}
\newcommand{\gcys}{generalized Calabi--Yau structure}

\date{} \usepackage{color} \definecolor{tocolor}{rgb}{.1,.1,.5}
\definecolor{urlcolor}{rgb}{.2,.2,.6}
\definecolor{linkcolor}{rgb}{.1,.1,.6}
\definecolor{citecolor}{rgb}{.6,.2,.1}
\hypersetup{backref=true, colorlinks=true, urlcolor=urlcolor,
  linkcolor=linkcolor, citecolor=citecolor}
\definecolor{darkgreen}{rgb}{0.0, 0.5, 0.0}

\begingroup
\hypersetup{linkcolor=blue}
\endgroup

\numberwithin{equation}{section}

\begin{document}

\title{Type changing, compact generalized Calabi--Yaus}
\author{Gil R. Cavalcanti}
\maketitle

\abstract{We provide the first compact examples of type-changing generalized Calabi--Yau manifolds using  Fei's construction of non-K\"ahler Calabi--Yau manifolds on branched covers of twistor spaces \cite{Fei16}. The structure, which is generically of type 1 and changes to type 3, is constructed on the product of a hyperelliptic surface of genus three and a four-torus, $\Sigma_3 \times \mathbb{T}^4$, with starting point a non-K\"ahler Calabi--Yau structure.
}
\vskip12pt
\noindent
MSC classification 2020: 	53D18, 81Q60. \\
Subject classification: Differential geometry.\\
Keywords: generalized complex manifolds, generalized Calabi--Yau manifolds, type-change.\\

\setcounter{tocdepth}{1}


\section{Introduction}

Generalized complex structures were introduced by Hitchin \cite{Hit03} and further developed by Gualtieri \cite{Gua07} in 2003. They caught the attention of mathematicians and physicists for being a simultaneous generalization of complex and symplectic structures. In fact, one can continuously deform a complex structure into a symplectic one through generalized complex structures \cite{Hit03}. Further, generalized complex structures are natural geometric structures underlying string theory, supersymmetry and mirror symmetry \cite{GHR84,Gua14,GMPT04,LMTZ05}.

At each point, generalized complex structures split into two disconnected classes, of even and
odd type. Within each class, the space of generalized complex structures is stratified according
to the type. In real dimension $2n$, type $0$ and type $1$ form the generic strata in the even
and odd cases, respectively, while types $n-1$ and $n$ form the lowest-dimensional strata.
Type $0$ corresponds to symplectic structures, whereas type $n$ corresponds to complex
structures. All intermediate types are genuinely new to the theory.

An important feature is that the type need not be constant: it is an upper semicontinuous
function that may vary across the manifold. Early examples showed that symplectic and complex
points can coexist on the same connected manifold \cite{Gua07}. This phenomenon has subsequently led to the construction of many examples of generalized complex $4$-manifolds that are neither complex nor symplectic \cite{CG09,Tor12,TY14,CKW20}.

A distinguished class of generalized complex structures consists of those with holomorphically
trivial canonical bundle. Hitchin called these \emph{generalized Calabi--Yau manifolds} \cite{Hit03}, since the condition agrees with the usual Calabi--Yau condition for a K\"ahler
manifold. Generalized Calabi--Yau structures are also relevant to string theory: this condition
is precisely what permits Witten's topological twist \cite{GMPT04,LMTZ05}.

This raises a natural question: {\it can a compact generalized Calabi--Yau manifold exhibit type
change?}

Examples in $\mathbb{R}^{2n}$ are easy to construct, so the difficulty lies in achieving
compactness. The constructions that have produced many type-changing generalized complex
structures in dimension four do not yield generalized Calabi--Yau structures, and no
alternative, surgery-based, construction has been produced. Another possible approach is
to deform a complex Calabi--Yau manifold. For such a deformation, one needs a closed
form $\alpha\in\Omega^{1,0}(M),$
whose zero locus becomes the type-change locus. If $M$ is K\"ahler, however, then $M$ admits a
Calabi--Yau metric, and a closed $(1,0)$-form is parallel. It must therefore either vanish
identically or be nowhere zero. This suggests that non-K\"ahler geometry is essential to the
construction.

In this paper we produce the first compact examples of type-changing generalized Calabi--Yau
manifolds. The construction follows the deformation approach above and uses twistor spaces,
hyperelliptic curves, and ideas introduced by Fei \cite{Fei16} for constructing non-K\"ahler
Calabi--Yau manifolds.

\section{Preliminaries}

In this section we introduce \gc\ and \gcy\ structures, give a few examples, outline the difficulties  and setup the eventual solution of the problem. 

\begin{definition} A \emph{generalized complex structure} on a $2n$-dimensional  manifold $M$ is a line subbundle $K \subset  \wedge^\bullet T^*_{\C} M$ such that pointwise any generator $\rho$ of $K$ is of the form
\begin{equation}\label{eq:alg condition 1}
\rho = e^{B+i\omega} \wedge \Omega
\end{equation}
where $B,\omega \in \Omega^2(M;\R)$, $\Omega$ is a decomposable $k$-form, for some $k \in \NN$ and
\begin{equation}\label{eq:alg condition 2}
\omega^{n-k}\wedge \Omega\wedge\bar{\Omega} \neq 0.
\end{equation}
Further, if $\rho$ is a nonvanishing local section of $K$, then there are a local vector field, $X$, and 1-form, $\xi$, such that
\[d\rho = \iota_X \rho + \xi\wedge \rho.\]
The \emph{type} of the generalized complex structure at a point $p$ is the degree of the form $\Omega$ at that point.
\end{definition}

Among  \gcss, a special class consists of those determined by globally defined closed forms:
\begin{definition}
A \emph{generalized Calabi--Yau} structure is a generalized complex structure for which the line bundle $K$ admits a nowhere vanishing closed section.
\end{definition}

It is good to have a few examples and non-examples in mind.
\begin{example}
A complex structure is a particular example of a generalized complex structure in which we take $K$ to be the canonical bundle, $\wedge^{n,0}T^*M$. The generalized Calabi--Yau condition is that the canonical bundle should be holomorphically trivial.   In complex geometry, including the non-K\"ahler context, manifolds satisfying this property are often referred to as complex Calabi--Yau manifolds.
\end{example}

\begin{example}
A symplectic structure, $\omega \in \Omega^2(M)$  induces a \gcys\ determined by the closed form $e^{i\omega}$.
\end{example}

\begin{example}[Noncompact type-changing \gcy]
For any triple of constants $\alpha$, $\beta$, $\gamma$,  the following form defines a generalized Calabi--Yau structure on $\C^3$:
\begin{equation}\label{eq:3d unimodular}
\rho = \alpha z_1dz_1 + \beta z_2dz_2 +\gamma z_3dz_3 + dz_1\wedge dz_2 \wedge dz_3.
\end{equation}

This structure has type 3 at the origin and, depending on the values of $\alpha$, $\beta$ and $\gamma$,  possibly at coordinate lines and planes. For $(\alpha,\beta,\gamma) \neq 0$, it has type 1 in the complement of the coordinate planes. 

More generally, given a complex Lie algebra, $\frak{g}$, the associated linear holomorphic Poisson structure  on $\frak{g}^*$, $\pi \in \wedge^2\frak{g}^*\tensor \frak{g}$ gives rise to a generalized complex structure
\[\rho = e^\pi\cdot \Omega,\]
where $\Omega$ is a complex volume form for $\frak{g}^*$. Then $\rho$ is a generalized Calabi--Yau structure on the real vector space underlying $\frak{g}^*$  if and only if $\frak{g}$ is unimodular \cite{ELW99}. If $\frak{g}$ is not Abelian, this \gcs\ has type change. The 3-parameter family in \eqref{eq:3d unimodular}  corresponds to all possible unimodular complex 3-dimensional Lie algebras.
\end{example}

The following elementary observation tells us where not to look for type-changing \gcys.

\begin{lemma}\label{lem:not type zero}
If a connected \gcym\ has type 0 at some point, then it is globally of type 0.
\end{lemma}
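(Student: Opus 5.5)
The plan is to show that the set of type-0 points is both open and closed; connectedness then gives the result. Let $\rho$ be a nowhere vanishing closed section of $K$. At each point, write $\rho_0$ for its degree-zero component. A pure form $e^{B+i\omega}\wedge\Omega$ has nonzero degree-zero part exactly when $\Omega$ has degree $0$, that is, exactly at type-0 points. So the set of type-0 points is
\[U=\{p\in M : \rho_0(p)\neq 0\}.\]
Since $\rho_0$ is a continuous function, $U$ is open. This is just the upper semicontinuity of the type.

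For closedness, I will use $d\rho=0$. In degree one this says $d\rho_0=0$, so $\rho_0$ is a locally constant complex-valued function. On a connected manifold $\rho_0$ is therefore a constant $c$. By hypothesis there is a point of type 0, so $c\neq 0$. Hence $\rho_0$ is nowhere zero, and every point of $M$ has type 0. With this observation the open/closed argument is not even needed.

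The only step that needs care is the pointwise claim that the degree-zero part of $\rho = e^{B+i\omega}\wedge\Omega$ is nonzero precisely when $k=0$. This follows from the definition. The exponential factor contributes $1$ in degree $0$ and otherwise only positive even degrees. Wedging with a decomposable $\Omega$ of degree $k$ produces only forms of degree at least $k$. So the degree-zero part is the function $\Omega$ itself when $k=0$, nonzero by the nondegeneracy condition \eqref{eq:alg condition 2}, and it vanishes when $k>0$. The degree-zero part does depend on the choice of generator, but its vanishing does not, since generators differ by nowhere vanishing functions. I do not expect any real obstacle here.
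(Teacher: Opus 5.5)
Your proposal is correct and follows the paper's proof: the degree-zero component of the closed generator is nonzero exactly at type-0 points, closedness forces it to be locally constant, and connectedness makes it a nonzero constant. Your pointwise check, your remark that the vanishing of $\rho_0$ does not depend on the choice of generator, and your avoidance of the paper's implicit even-parity decomposition $\rho_0+\rho_2+\cdots$ are only minor added care, not a different route.
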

\begin{proof}
Indeed, writing the defining form
\[\rho = \rho_0 + \rho_2 +\cdots,\]
where $\rho_i$ has degree $i$, we see that the structure has type 0 at some point if and only if $\rho_0$ is not identically zero. Then the generalized Calabi--Yau condition implies that $d\rho_0 =0$, which makes $\rho_0$ locally constant. Since $M$ is connected, $\rho_0$ is a constant and nonvanishing function, hence the type is zero at all points.
\end{proof}

\section{Type-changing compact generalized Calabi--Yaus}

By Lemma \ref{lem:not type zero}, a type-changing generalized Calabi--Yau manifold cannot have type 0 at any point. We therefore consider the next possibility: a structure that is generically of
type $1$ and reaches type $3$ along a nonempty locus in a 6-dimensional manifold.

We try to produce such an example by a deformation argument. Suppose $M$ is a complex Calabi--Yau threefold with holomorphic volume form
\[
\Omega\in\Omega^{3,0}(M).
\]
If we can find a closed $(1,0)$-form
\[
\alpha\in\Omega^{1,0}(M)
\]
whose zero locus is nonempty, then
\[
\rho=\alpha+\Omega
\]
defines a generalized Calabi--Yau structure. Indeed, $\rho$ is closed, and it satisfies the
algebraic conditions (2.1) and (2.2). Its type is $1$ wherever $\alpha\neq0$ and $3$ precisely
where $\alpha=0$.

On a K\"ahler Calabi--Yau manifold this is impossible: with respect to a Calabi--Yau metric,
every closed $(1,0)$-form is parallel. Hence it is either identically zero or nowhere vanishing.
We must therefore look outside the K\"ahler setting.

To produce our non-K\"ahler example, we rely on Fei's construction of non-K\"ahler Calabi--Yau manifolds using twistor spaces \cite{Fei16}.

\begin{example}[Type-changing \gcy]
Let $Z$ be the twistor space of the flat Riemannian 4-torus, $\mathbb{T}^4 = \R^4/\Gamma$, for a co-compact lattice $\Gamma$. That is, as a differentiable manifold $Z = \C P^1 \times \mathbb{T}^4$, where $\C P^1$ is the space of linear complex structures on $\R^4$ compatible with the flat metric. The complex structure on $Z$ is the product of the standard complex structure on $\C P^1$ and, for each $x \in \C P^1$, the tautologial structure on $\{x\}\times\mathbb{T}^4$. That is, the point  $x\in \C P^1$ determines the complex structure on $\mathbb{T}^4$. Since the torus is flat, this structure is integrable \cite{AHS78}.

By construction, as a complex manifold $Z$ is not the product of $\C P^1$ and $\mathbb{T}^4$ and the projection $Z \to \mathbb{T}^4$ is smooth, but not holomorphic. Notice however that the projection onto the first factor
\[\pi \colon Z \to \C P^1\]
is a holomorphic map.

Next, let $p\colon\Sigma \to \C P^1$ be a hyperelliptic curve of genus 3, for example, the one given by the polynomial
\[y^2 = x^8-1,\]
which is a double cover of $\C P^1$ with branch points the $8^{th}$-roots of unit.

The canonical bundle of a hyperelliptic surface of genus $g$ is given by $p^*\mc{O}(g-1)$, so in our case, $K_\Sigma = p^*\mc{O}(2)$.

We can then form the pull-back bundle
\[\xymatrix{ \tilde{Z} = p^*Z  \ar[r]\ar[d]^{\tilde\pi}&  Z\ar[d]^{\pi}\\
\Sigma \ar[r]^p& \C P^1}\]
Then the canonical bundle of $p^*Z$ is  \cite{Fei16}[Theorem 4.6, Example 4.8]
\[K_{\tilde{Z}} = \pi^*K_\Sigma \tensor p^* \circ \pi^*\mc{O}(-2) = \pi^* \circ p^*(\mc{O}(2)\tensor \mc{O}(-2)) = \mc{O}.\]
Therefore the manifold $\tilde{Z}$, which is diffeomorphic but not biholomorphic to $\Sigma \times T^4$, has trivial canonical bundle.

Since the  maps $p$ and $\pi$ are holomorphic, so is $\tilde\pi$, the pullback of ${\pi}$. Hence, given any holomorphic form, $\alpha \in \Omega^{1,0}(\Sigma)$, $\tilde\alpha = \tilde{\pi}^*\alpha$ is a holomorphic $(1,0)$-form on $\tilde{Z}$. Since $\Sigma$ has genus 3, $\alpha$ has generically four zeros and hence $\tilde\alpha$ vanishes on four fibers of $\tilde{Z}\stackrel{\tilde{\pi}}{\to}\Sigma$.

The closed form $\tilde\alpha + \Omega$ is a type-changing Calabi--Yau structure on $\tilde{Z}$, where $\Omega$ is an arbitrary holomorphic volume form on $\tilde{Z}$.
\end{example}

\section*{Acknowledgments}
I (and others) have known for a long time that the path used above would lead to an example of a type-changing Calabi--Yau manifold. The problem was the lack of examples of non-K\"ahler Calabi--Yau manifolds with the desired properties. Recent advances in large language models helped identify Fei’s construction and reproduce the relevant argument.

At the human side, I thank Gueo Grantcharov for helping with a quick double check of plausibility of the argument.

\bibliographystyle{hyperamsplain-nodash}
\bibliography{references}

{{
  \bigskip
  \footnotesize

  G. R.~Cavalcanti\par\nopagebreak \textsc{Department of Mathematics, Universiteit Utrecht, The Netherlands.}\par\nopagebreak
  \textit{E-mail}: \texttt{gil.cavalcanti@gmail.com.}

%
 }}

\end{document}